\newtheoremstyle{theorem}%name
{10pt} % space above
{10pt} % space below
{\sl} % bofy font
{\parindent} % ident - empty=no indent, \parindent= paragraph indent
{\bf} % thm head font
{.} % punctuation after thm head
{ } % space after thm head: `` ``=normal \newline=linebreak
{} % thm head specification
\theoremstyle{theorem}
\newtheorem{theorem}{Theorem}
\newtheorem{corollary}[theorem]{Corollary}
\newtheoremstyle{defi}%name
{10pt} % space above
{10pt} % space below
{\rm} % bofy font
{\parindent} % ident - empty=no indent, \parindent= paragraph indent
{\bf} % thm head font
{. } % punctuation after thm head
{ } % space after thm head: `` ``=normal \newline=linebreak
{} % thm head specification
\theoremstyle{defi}
\newtheorem{definition}[theorem]{Definition}
\begin{document}
\title {Riesz type theorem in locally convex vector spaces}
\author {Miloslav Ducho\v n\\[2pt]
Mathematical Institute, Slovak Academy of Sciences,\\
 \v Stef\'anikova 49,
SK-814 73 Bratislava,  Slovakia\\
miloslav.duchon@mat.savba.sk } \maketitle
%\thanks{Supported by grant agency Vega, N. 2/0212/10}
\begin{abstract}
The present paper is concerned with some representatons of linear mappings of continuous functions into locally convex vector spaces, namely
%\begin
{Theorem} : If $X$ is a complete Hausdorff
locally convex vector space, then a general form of weakly compact mapping
%\begin{equation}
   $ T:C{[a,b]}\to X $
%\end{equation}
is of the form
%\begin{equation}
$Tg=\int_a^bg(t)dx(t)$,
%\end{equation}
where the function $x(\cdot):[a,b] \to X$ has a weakly compact semivariation on $[a,b]$.
%\end{theorem}
This theorem is a generalization of the result  from Banach spaces to locally convex vector spaces.

$$ %\begin{equation}
 $$ %\end{equation}

%\end{abstract}

{\bf AMS Subject Classification:} 28E10, 81P10.

{\bf Key Words and Phrases:} locally convex space, weakly compact semivariation,
vector measure on Borel subsets.
%\subjclass[2010]{28E10, 81P10}

\thanks{Supported by grant agency Vega, N. 2/0212/10}.

\end{abstract}

\section{Introduction}

The set $\mathcal{WCS}$ of functions $x(\cdot)$ of weakly compact semivariation on the real line $R$ to a complete complex locally convex vector space $X$ derives its importance from the fact that each function $x(\cdot)$ in $\mathcal{WCS}$ is associated in a natural way with a vector-valued measure on the ring $\mathcal B$ of bounded Borel sets, and conversely. The relationship resembles that between ordinary real and complex functions of bounded variation and real and signed measures defined on $\mathcal B$. A precise statement will be given in sect 5.

The extension to locally convex vector space is very useful because it makes possible to include also many important spaces, which are not normable, e. g., nuclear locally convex vector spaces often appearing in applications.

\section {Functions with bounded semivariation}
% p. 8-34 (Hille \cite {hi})

Let $X$ be a locally convex vector space and $x(\cdot)$ a function on an interval $[a,b]$ with
values in $X$. We denote by $X'$ the topological dual of $X$.

We recall now some definitions and facts, see \cite{hi}.

% Def 1
\begin{definition}
We will say that $x(\cdot)$ has {\it bounded semi-variation} if the set
$$V_{x(\cdot)}=\{\sum_{j=1}^{k}\alpha_{j}(x(t_{j})-x(t_{j-1}))\mid
k\ge 1, a=t_{0}<t_{1}<\ldots<t_{k}=b,|\alpha_{j}|\leq
1\}$$is  bounded  in $X$.

Equivalently, $x(\cdot)$ has {\it bounded semi-variation} if and only if $p(V_{x(\cdot)})$ is a bounded set of real numbers for each $p\in Q$, where $Q$ is the set of seminorms determining the topology of $X$.
\end{definition}
\medskip
The following  two theorems are proved, e. g., in \cite{dd}.
\begin{theorem}
    A function $x(\cdot)$ has bounded semi-variation if and only if elements
    of the set $\{x'\circ x\mid x'\in X'\} $ have uniformly bounded variations.
\end{theorem}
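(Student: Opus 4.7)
The plan is to prove both implications by exploiting the duality between continuous seminorms on $X$ and equicontinuous subsets of $X'$, combined with a scalarization trick that turns the unimodular weights $\alpha_j$ in the definition of $V_{x(\cdot)}$ into a choice of ``sign'' for a given functional $x'$.

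First I will handle the forward direction. Assuming $V_{x(\cdot)}$ is bounded in $X$, fix an equicontinuous set $H\subset X'$, concretely the polar $U_p^{\circ}$ of the semi-ball $\{y:p(y)\le 1\}$ of some $p\in Q$. Boundedness of $V_{x(\cdot)}$ supplies $M=M(H)<\infty$ with $|\langle x',v\rangle|\le M$ for all $v\in V_{x(\cdot)}$ and $x'\in H$. For a partition $a=t_0<\cdots<t_k=b$ and any $x'\in H$, pick
$$\alpha_j=\overline{\langle x',x(t_j)-x(t_{j-1})\rangle}\,/\,|\langle x',x(t_j)-x(t_{j-1})\rangle|$$
(with $\alpha_j=0$ when the denominator vanishes), so that $|\alpha_j|\le 1$ and $v:=\sum_j\alpha_j(x(t_j)-x(t_{j-1}))\in V_{x(\cdot)}$. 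By construction,
$$\langle x',v\rangle=\sum_{j=1}^k\bigl|(x'\circ x)(t_j)-(x'\circ x)(t_{j-1})\bigr|\le M.$$
Taking the supremum over partitions yields $\operatorname{var}(x'\circ x)\le M$ uniformly for $x'\in H$.

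For the converse, assume the variations of $x'\circ x$ are uniformly bounded on each equicontinuous subset of $X'$. For any $v=\sum_j\alpha_j(x(t_j)-x(t_{j-1}))\in V_{x(\cdot)}$ with $|\alpha_j|\le 1$ and any $x'\in X'$, the triangle inequality gives $|\langle x',v\rangle|\le\operatorname{var}(x'\circ x)$. Given $p\in Q$, I invoke the Hahn--Banach duality $p(v)=\sup_{x'\in U_p^{\circ}}|\langle x',v\rangle|$ to conclude
$$\sup_{v\in V_{x(\cdot)}}p(v)\le\sup_{x'\in U_p^{\circ}}\operatorname{var}(x'\circ x)<\infty,$$
and since this holds for every $p\in Q$, the equivalent criterion recorded in the definition shows that $V_{x(\cdot)}$ is bounded in $X$.

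The main obstacle is pinning down what ``uniformly bounded variations'' should mean in the locally convex setting: it must refer to uniformity over equicontinuous subsets of $X'$, since otherwise the statement is either trivial (rescaling a single $x'$ destroys any global bound) or impossibly strong. Once that interpretation is in place, both directions reduce to the scalarization above together with the polar--seminorm identity $p(v)=\sup_{x'\in U_p^{\circ}}|\langle x',v\rangle|$; no completeness or metrizability of $X$ is needed.
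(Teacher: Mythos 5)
Your argument is sound. Note first that the paper itself gives no proof of this statement: it is quoted with the remark that it is ``proved, e.g., in [dd]'' (Ducho\v n--Debi\`eve), so there is no in-text proof to compare against; your proposal has to stand on its own, and it does. The two pillars you use are exactly the right ones: the conjugate-phase choice of the $\alpha_j$, which shows that for fixed $x'$ the supremum of $|\langle x',v\rangle|$ over $v\in V_{x(\cdot)}$ equals $\operatorname{var}(x'\circ x)$, and the Hahn--Banach/bipolar identity $p(v)=\sup_{x'\in U_p^{\circ}}|\langle x',v\rangle|$, which converts boundedness of $V_{x(\cdot)}$ into uniform bounds over the polars $U_p^{\circ}$. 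You are also right to flag that the theorem is literally false as stated (the variations of $x'\circ x$ over \emph{all} of $X'$ cannot be uniformly bounded unless $x$ is constant), and restricting the uniformity to equicontinuous sets is the standard and surely intended reading; it is also consistent with the paper's Theorem 2, which restricts to a polar $\{x': p'_E(x')\le 1\}$. One remark worth adding: your converse uses more than it needs to. If one only assumes the pointwise statement that each $x'\circ x$ has finite variation, then $\sup_{v\in V_{x(\cdot)}}|\langle x',v\rangle|\le\operatorname{var}(x'\circ x)<\infty$ for every $x'$ already says $V_{x(\cdot)}$ is weakly bounded, and Mackey's theorem (weakly bounded $=$ bounded in a locally convex space) finishes the proof. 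That variant yields a formally stronger equivalence (pointwise finiteness of the variations already implies the uniform bounds on equicontinuous sets) and sidesteps the interpretive question entirely, at the cost of invoking Mackey's theorem rather than only Hahn--Banach.
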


Recall that if $A \subset X$ is bounded set then $p_A'(x')= \sup\{|(x,x')|:x\in A\}$ is a seminorm on $X'$ (see \cite{ro}, p.45).
\begin{theorem}
The function $x(\cdot):[a,b]\to X$ has bounded semivariation on $[a,b]$ if and only if the elements of the set $\{x'\circ x\mid x'\in X', p'_{E}(x')\leq 1\}$ have uniformly bounded  variations on $[a,b]$.
\end{theorem}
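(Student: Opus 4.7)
The strategy is to identify the variation of the scalar function $x'\circ x$ with the seminorm $p'_{V_{x(\cdot)}}$ evaluated at $x'$, and then to compare this seminorm with $p'_{E}$ using Mackey's theorem that weak boundedness coincides with boundedness in a locally convex space.

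The first step I would carry out is to prove, for every $x'\in X'$, the identity
\[
\mathrm{var}(x'\circ x)\;=\;\sup_{v\in V_{x(\cdot)}}|x'(v)|\;=\;p'_{V_{x(\cdot)}}(x').
\]
The inequality ``$\ge$'' is immediate from the triangle inequality applied to $v=\sum_{j}\alpha_{j}(x(t_{j})-x(t_{j-1}))$ with $|\alpha_{j}|\le 1$. For the reverse inequality, fixing any partition and choosing coefficients $\alpha_{j}$ of modulus one that align phases, $\sum_{j}|x'(x(t_{j})-x(t_{j-1}))|$ equals $|x'(v)|$ for the element $v\in V_{x(\cdot)}$ built from those $\alpha_{j}$. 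This identity is the structural engine of the proof and was already implicit in the proof of Theorem~2.

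For the direction $(\Rightarrow)$ I would set $E:=V_{x(\cdot)}$, which is bounded by hypothesis, and read off from the identity that $\mathrm{var}(x'\circ x)=p'_{E}(x')\le 1$ on the polar set $\{x'\in X':p'_{E}(x')\le 1\}$; hence the variations are uniformly bounded by $1$. For $(\Leftarrow)$, assume a bounded $E\subset X$ and a constant $M$ are given with $\mathrm{var}(x'\circ x)\le M$ whenever $p'_{E}(x')\le 1$. Homogenising in $x'$ upgrades this to $p'_{V_{x(\cdot)}}(x')=\mathrm{var}(x'\circ x)\le M\,p'_{E}(x')$ for every $x'\in X'$ (the degenerate case $p'_{E}(x')=0$ is handled by applying the hypothesis to $t\,x'$ for all $t>0$ and letting $t\to\infty$, forcing the variation to vanish). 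Since $E$ is bounded, $p'_{E}(x')<\infty$ for each $x'$, so $\sup_{v\in V_{x(\cdot)}}|x'(v)|<\infty$ for each $x'\in X'$; that is, $V_{x(\cdot)}$ is weakly bounded.

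The final step is to invoke the standard result that in a locally convex Hausdorff space a weakly bounded set is bounded for the original topology (Mackey's theorem, see \cite{ro}); this promotes the weak boundedness of $V_{x(\cdot)}$ to boundedness in $X$, which is exactly bounded semivariation. I expect the only delicate point to be the degenerate case $p'_{E}(x')=0$ in the backward direction, where a scaling trick is needed; the rest of the argument is a direct adaptation of the Banach-space computation, with Mackey's theorem playing the role that the Hahn--Banach norm duality plays in the normed setting.
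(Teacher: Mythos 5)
Your proof is correct and complete; note that the paper itself offers no argument for this statement (it only cites Ducho\v n--Debi\`eve \cite{dd}), so there is nothing internal to compare against. The structural identity $\mathrm{var}(x'\circ x)=\sup_{v\in V_{x(\cdot)}}|x'(v)|=p'_{V_{x(\cdot)}}(x')$ is exactly the right engine, both inequalities are justified correctly (triangle inequality one way, phase-aligning unimodular $\alpha_j$ the other), and your handling of the degenerate case $p'_E(x')=0$ by scaling is the one genuinely delicate point and you get it right. Two remarks. First, the theorem as printed never says what $E$ is; your reading --- ``there exists a bounded set $E$'' for the reverse implication, with $E=V_{x(\cdot)}$ witnessing the forward one --- is the only interpretation under which the statement is true (with ``for every bounded $E$'' the forward direction fails already for $E=\{0\}$), and it is worth saying so explicitly. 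Second, you can if you prefer avoid Mackey's theorem: the homogenised inequality $p'_{V_{x(\cdot)}}\le M\,p'_{E}$ says precisely that $V_{x(\cdot)}\subset M\,E^{\circ\circ}$, and by the bipolar theorem $E^{\circ\circ}$ is the closed absolutely convex hull of $E\cup\{0\}$, which is bounded because $E$ is; both routes rest on the same Hahn--Banach machinery, so this is a matter of taste rather than of depth.
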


\section {Functions with weakly compact semivariation}
%(Hille \cite {hi})
Let us consider a function $x(\cdot): R\to X$.
Denote, for $[a,b]$,
$$
E(a,b)= \{\sum_{i=1}^n[x(t_{2i}- x(t_{2i-1})], n\ge 1, -\infty < a \le t_1< t_2<\dots<t_{2n}\le b<\infty \},
$$
We shall say that the function $x(\cdot):R\to X$ has a weakly compact semivariation if the set $E(a,b)$ is relatively weakly compact.
It can be proved , Edw  Th.[5], or Stud. [11]
\begin{theorem}
If $X$ is a sequential complete locally convex vector space and $x(.):R\to X$ has on every interval $[a,b]$ weakly compact semivariation, then the limits of the function $x(\cdot)$ from the right and from the left, $x(\pm 0)$, exist in every point $t$ and if $X$ is also metrisable, then the function $x(.)$ is continuous for all $t$ except of the most countable set.
\end{theorem}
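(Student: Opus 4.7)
The strategy is to reduce the problem to the scalar case by composition with continuous linear functionals, invoke the classical theorem on real-valued functions of bounded variation, and then lift the scalar conclusions back into $X$ using the weakly compact semivariation hypothesis together with sequential completeness. The preparatory step is available because weak compactness of $E(a,b)$ implies weak boundedness, hence boundedness, and hence bounded semivariation on every $[a,b]$. Theorem~3 then yields that $x'\circ x$ is of bounded variation on $[a,b]$ for every $x'\in X'$, so every scalar composition has one-sided limits at every point and is continuous outside a countable set.

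To produce $x(t+0)$ at a given $t\in R$, I fix any sequence $s_n\searrow t$. For every $x'\in X'$ the scalar sequence $(x'(x(s_n)))_n$ converges, with limit independent of the choice of $s_n$; hence $(x(s_n))_n$ is weakly Cauchy. Each difference $x(s_n)-x(s_m)$ lies, up to sign, in the relatively weakly compact set $E(t,t+1)$, so the sequence $(x(s_n))$ has a weak cluster point, which by sequential completeness lies in $X$. Uniqueness of the scalar limits forces the whole sequence to weakly converge to this cluster point, and the limit is the same for any other sequence $s_n\searrow t$. Calling it $x(t+0)$ and arguing symmetrically for $s_n\nearrow t$ yields $x(t\pm 0)$ at every $t$.

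For the metrizable case, let $(p_n)_{n\ge 1}$ be a countable family of continuous seminorms generating the topology of $X$. It is enough, for every $n$ and every $\varepsilon>0$, to show that $D_n(\varepsilon)=\{t\in[a,b]:p_n(x(t+0)-x(t-0))\ge\varepsilon\}$ is finite on every bounded interval; the set of discontinuities of $x$ is then contained in $\bigcup_{n,k}D_n(1/k)$. If $D_n(\varepsilon)$ were infinite, one selects points $t_1,t_2,\ldots\in D_n(\varepsilon)$ and pairs $s_k^-<t_k<s_k^+$ with pairwise disjoint intervals, chosen so that $v_k:=x(s_k^+)-x(s_k^-)$ approximates the $k$-th jump in $p_n$, whence $p_n(v_k)\ge\varepsilon/2$. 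Since every $v_k$ lies in $E(a,b)$, weak compactness of $E(a,b)$ together with the disjointness of the intervals and the scalar bounded-variation property forces $v_k\to 0$ weakly along a subsequence; metrizability of $X$ is then invoked (via a Mazur-type passage to convex combinations inside the weakly compact set $E(a,b)$) to contradict the lower bound $p_n(v_k)\ge\varepsilon/2$.

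The main obstacle is the second step: passing from the scalarwise statement \emph{``$x'\circ x$ has one-sided limits at $t$''} to the genuine topological statement \emph{``$x(s)$ converges in $X$ as $s\to t^\pm$''}. The weak compactness of the semivariation set, together with sequential completeness of $X$, is precisely what is tailored to this lifting, via an Eberlein--Smulian-type sequential extraction adapted to the locally convex (non-Banach) setting. The countability of discontinuities in the metrizable case then rests on the same ingredients applied seminorm by seminorm, together with the fact that metrizability allows one to pick a countable defining family $(p_n)$ and to exploit the better interaction of weak and strong topologies on weakly compact sets.
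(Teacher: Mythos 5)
The paper offers no proof of this theorem (it defers to Edwards and to Studen\'a), so your plan must stand on its own, and as written it has a genuine gap in both halves: you never use the defining feature of $E(a,b)$, namely that it contains all \emph{finite sums} of increments of $x$ over disjoint intervals, and that is precisely the ingredient that makes the conclusion hold in the original topology of $X$ rather than merely in the weak topology. In the first half, your argument (weakly Cauchy by the scalar bounded-variation reduction, plus a weak cluster point from relative weak compactness) produces only a \emph{weak} one-sided limit. The theorem, and the paper's later use of it ($\mathcal{WCS}_0$ consists of \emph{strongly} right-continuous functions), requires the limit in the given topology. The standard route is: for $s_1>s_2>\cdots\searrow t$ put $v_k=x(s_k)-x(s_{k+1})$; every finite sum $\sum_{k\in F}v_k$ lies in $E(t,s_1)$, every subseries is weakly Cauchy (since $\sum_k|x'(v_k)|\le \mathrm{var}(x'\circ x)<\infty$) and sits in a relatively weakly compact set, hence converges weakly; by the Orlicz--Pettis theorem for locally convex spaces the series is then subseries convergent in the original topology, so the partial sums $x(s_1)-x(s_{N+1})$ converge in $X$ (this is where sequential completeness is used). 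Your plan stops at weak convergence and supplies no mechanism for this upgrade.

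The same omission breaks the countability argument. Having chosen disjoint intervals with $p_n(v_k)\ge\varepsilon/2$ and observed that $v_k\to 0$ weakly, you propose to reach a contradiction ``via a Mazur-type passage to convex combinations.'' Mazur's theorem gives strongly small convex combinations of a weakly null sequence; it is perfectly consistent with $p_n(v_k)\ge\varepsilon/2$ (the unit vector basis of $\ell^2$ is weakly null, the space is metrizable, and the vectors are bounded below in norm), so no contradiction follows. The contradiction must again come from the subseries structure: all sums $\sum_{k\in F}v_k$ lie in the relatively weakly compact set $E(a,b)$, whence by the Orlicz--Pettis argument $v_k\to 0$ in the original topology, contradicting $p_n(v_k)\ge\varepsilon/2$ directly; metrizability is needed only to reduce the discontinuity set to a countable union over a countable defining family of seminorms, as you correctly set up. A smaller defect: your set $D_n(\varepsilon)$ records only points where $x(t+0)\ne x(t-0)$; you must also control the points where $x(t)$ differs from $x(t\pm 0)$, which the same disjoint-interval argument handles when applied to the increments $x(s_k^{+})-x(t_k)$.
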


In the next chapter, there is presented the existence of the unique correspondence of the functions continuous from the right with weakly compact semivariation  on $[a,b]$ with vector measures on borelian sets in $[a,b]$.

\section { Riemann-Stieltjes integral in locally  convex space}

 The notion of Riemann-Stieltjes integral can be extended to vector valued functions with values in   locally convex vector space in two directions.[ For the case of Banach space we refer to Hille \cite {hi})]. The values in locally convex vector space  $X$ can take either the integrated function  or the integrand.

Let further $x(\cdot):[a,b] \to X$ and $g(\cdot\cdot$ be the scalar-valued function defined on $[a,b]$. For the division of the interval $[a,b]$, $t_0=a \le t_1\le \dots\le t_n=b$ together with system of points $s_i, \, t_{i-1} \le s_i \le t_i,$ we put $ d= \max_{i} {|t_i-t_{i-1}|}$.

Similarly as for scalar functions we may define two kinds of integrals.
\begin{definition}
Let
$$
S_d(x,g)= \sum_{i=1}^n x(s_i)[g(t_i)-g(t_{i-1})].
$$
If there exists $\lim_{d\to 0} S_d$ as an element of the space $X$, we denote it
$$
\int_a^b x(t)dg(t)
$$
\end{definition}

\begin{definition}
Let
$$
s_d(g,x)  = \sum_{i=1}^n g(s_i)[x(t_i)-x(t_{i-1})]
$$
If there exists $\lim_{d\to 0} s_d$ as an element of the space $X$, we denote it
$$
\int_a^b g(t)dx(t)
$$
\end{definition}
In an integral calculus the important place has the theorem on integration "per partes". In this way defined integral has the property that for it is possible to generalize a theorem on integration per partes.
\begin{theorem}
If there exists either integral
$$
\int_a^b x(t)dg(t), \int_a^b g(t)dx(t),
$$
then there exists also the second one and it holds
$$
\int_a^b x(t)dg(t) = x(b)g(b) - x(a)g(a) - \int_a^b g(t)dx(t).
$$
\end{theorem}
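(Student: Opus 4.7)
The plan is to derive the formula first at the level of Riemann sums via Abel summation (discrete integration by parts), and then to pass to the limit in the topology of $X$. Since Abel's identity is purely algebraic, it applies verbatim in any vector space; the bulk of the work is to reinterpret the rearranged sum as a Riemann--Stieltjes sum for the conjugate integral, with mesh controlled by the original mesh, so that convergence transfers automatically.

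Concretely, I would fix a partition $a=t_0<t_1<\dots<t_n=b$ with evaluation points $s_i\in[t_{i-1},t_i]$ and apply summation by parts to $s_d(g,x)=\sum_{i=1}^n g(s_i)[x(t_i)-x(t_{i-1})]$; after re-indexing this becomes
$$s_d(g,x)=g(s_n)x(b)-g(s_1)x(a)-\sum_{i=1}^{n-1}x(t_i)\bigl[g(s_{i+1})-g(s_i)\bigr].$$
Writing $g(s_n)x(b)=g(b)x(b)-[g(b)-g(s_n)]x(b)$ and $g(s_1)x(a)=g(a)x(a)+[g(s_1)-g(a)]x(a)$ absorbs the endpoint corrections into the remaining sum, and the net effect is the identity
$$s_d(g,x)=x(b)g(b)-x(a)g(a)-S_{\tilde\pi}(x,g),$$
where $S_{\tilde\pi}(x,g)$ is the Riemann--Stieltjes sum for $\int_a^b x\,dg$ built on the partition $a<s_1<\dots<s_n<b$ (with $a$ prepended and $b$ appended) and evaluation points $t_0=a,t_1,\dots,t_{n-1},t_n=b$. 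A direct comparison shows that each new subinterval spans at most two consecutive old ones, so the new mesh $\tilde d$ satisfies $\tilde d\le 2d$.

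The conclusion follows by passing to the limit. Suppose $\int_a^b x(t)\,dg(t)$ exists. Since $\tilde d\to 0$ whenever $d\to 0$, the net $S_{\tilde\pi}(x,g)$ converges in $X$ to that integral, and hence $s_d(g,x)$ converges in $X$ to $x(b)g(b)-x(a)g(a)-\int_a^b x(t)\,dg(t)$. So $\int_a^b g(t)\,dx(t)$ exists with the stated value. The converse is obtained by applying the same Abel identity starting from a Riemann sum for $\int_a^b x\,dg$; the construction is entirely symmetric in $x$ and $g$.

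The only delicate point is the mesh comparison $\tilde d\le 2d$: without it, existence of one integral would not force convergence of the sums appearing on the other side along \emph{every} net of partitions of vanishing mesh. Local convexity introduces no additional difficulty, since the Abel identity is a literal equality in $X$ and convergence in $X$ is preserved by addition and scalar multiplication, so the limit argument works uniformly without any separate seminorm estimate.
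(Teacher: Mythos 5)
Your proof is correct and takes essentially the same route as the paper: Abel summation (discrete integration by parts) turns the Riemann sum for one integral into $x(b)g(b)-x(a)g(a)$ minus a Riemann sum for the other integral over the dual partition formed by the tags (with $a$ and $b$ adjoined), whose mesh is at most $2d$, and the identity is then passed to the limit. The paper merely runs the identity in the opposite direction, starting from $S_d(x,g)$, and records the same key facts --- the interlacing $s_i\le t_i\le s_{i+1}$ and the mesh bound $\max_i|s_{i+1}-s_i|\le 2d$ --- more tersely than you do.
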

\begin{proof} (cf. Hille  \cite {hi} ) It is clear, after some elementary steps, that it holds
$$
\sum_{i=1}^n x(s_i)[g(t_i)-g(t_{i-1})]= x(b)g(b)- x(a)g(a) -\sum_{i=0}^n[g(t_i)[x(s_{i+1})-x(s_i)],
$$
where $s_0 = a, s_{n+1}=b$, since $s_0=a\le s_1\le s_2\le \dots \le s_{n+1}=b$ is also a division of the interval $[a,b]$
with the property $s_i \le t_i \le s_{i+1}$ and $\max_{i} |s_{i+1}-s_i |\le 2d$. By limiting process we obtain the validity of the theorem.
\end{proof}

\begin{theorem}
Let either
$ 1)\, \, x(\cdot):[a,b]\to X$, where $X$ is sequentially complete locally convex vector space, be continuous on $[a,b]$ and $g(\cdot)$ be scalar valued function on $[a,b]$ with bounded variation or

$ 2) \,\, x(\cdot):[a,b]\to X$, where $X$ is sequentially complete locally convex vector space, have bounded $p$-semivariation on $[a,b]$ and $g(\cdot)$ be a continuous scalar valued function  on $[a,b]$.

Then the integrals
$$
\int_a^b x(t)dg(t) ,\, \int_a^b g(t)dx(t).
$$
exist in the topology of the space $X$.
\end{theorem}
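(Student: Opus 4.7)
The integration-by-parts theorem above immediately reduces the workload: in each case it suffices to establish the existence of one of the two integrals, since the other then follows automatically. In case $(1)$ the natural target is $\int_a^b x(t)\,dg(t)$, where the continuity of $x$ and the bounded variation of $g$ combine cleanly; in case $(2)$ the natural target is $\int_a^b g(t)\,dx(t)$, where the hypothesis on the semivariation of $x$ can be brought to bear.

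The strategy in both cases is a Cauchy-net argument, seminorm by seminorm. Fix $p\in Q$ and two tagged partitions $\pi_1,\pi_2$ of mesh less than $\delta$. Pass to a common refinement $\pi$ and estimate $p(S_{\pi_1}-S_\pi)$ and $p(S_{\pi_2}-S_\pi)$ separately; the triangle inequality then delivers the Cauchy property. Once this is proved for every $p$, extracting a sequence of partitions with mesh $d_n\to 0$ and invoking sequential completeness produces a candidate limit, and a standard interlacing argument shows that every such sequence converges to the same point, which is the desired integral.

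In case $(1)$, continuity of $x$ on the compact interval $[a,b]$ gives $p$-uniform continuity, so $p(x(s_i)-x(s_{ij}'))<\varepsilon$ whenever $s_i$ and $s_{ij}'$ lie in the same subinterval of $\pi_1$. Writing
$$S_{\pi_1}(x,g)-S_\pi(x,g)=\sum_{i,j}\bigl(x(s_i)-x(s_{ij}')\bigr)\bigl(g(u_j^{(i)})-g(u_{j-1}^{(i)})\bigr)$$
and applying $p$ termwise yields $p(S_{\pi_1}-S_\pi)\le \varepsilon\cdot V(g)$, where $V(g)$ denotes the total variation. In case $(2)$, $g$ is uniformly continuous on $[a,b]$ with some modulus $\omega(g,\delta)\to 0$, and the analogous difference has the form
$$s_{\pi_1}(g,x)-s_\pi(g,x)=\omega(g,\delta)\sum_{i,j}\alpha_{ij}\bigl(x(u_j^{(i)})-x(u_{j-1}^{(i)})\bigr),\quad |\alpha_{ij}|\le 1,$$
so that the inner sum is by definition an element of $V_{x(\cdot)}$ and has $p$-value at most $M_p:=\sup_{v\in V_{x(\cdot)}}p(v)$; hence $p(s_{\pi_1}-s_\pi)\le \omega(g,\delta)\cdot M_p\to 0$.

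The principal subtlety is the indexing in case $(2)$: to invoke the semivariation bound one must display the difference of the sums as a single sum whose $x$-increments are consecutive along one partition (the refinement $\pi$) and whose scalar coefficients all have modulus at most $1$ after rescaling by $\omega(g,\delta)$. Once this telescoping bookkeeping is in place, the estimate is immediate, but it is precisely this step that converts the abstract semivariation hypothesis into the concrete Cauchy bound needed to apply sequential completeness.
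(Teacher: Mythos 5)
Your proof is correct and follows essentially the same route as the paper: a Cauchy estimate on the Riemann--Stieltjes sums via a common refinement, using the $p$-uniform continuity of $x$ against the total variation of $g$ in case (1) and the modulus of continuity of $g$ against the $p$-semivariation of $x$ in case (2), followed by an appeal to sequential completeness through a sequence of partitions with mesh tending to zero. The only (minor) difference is that you invoke the integration-by-parts theorem explicitly to obtain the second integral in each case, a step the paper leaves implicit.
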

\begin{proof}
1) Let $x(\cdot):[a,b]\to X$ be a continuous function and $g(\cdot)$ be a scalar valued function on $[a,b]$ with bounded variation. Since $x(\cdot)$ is uniformly continuous, for all $p\in P$, for arbitrary $\epsilon > 0$ there exists $\delta > 0$ such that for all $s_1, s_2$ such that $|s_1-s_2| < \delta$ there holds
$$
p[x(s_1)-x(s_2)] < \epsilon.
$$

%2) Let $\,\, x(.):[a,b]\to X$, where $X$ is sequentially complete locally convex vector space, $x(.)$ %have bounded $p$-semivariation on $[a,b]$ and $g(.)$ be a continuous scalar valued function defined on %$[a,b]$.
Considering divisions such that $d_1, d_2 < {\delta}/2$ it holds
$$
p(S_{d_1} - S_{d_2})\le 2 \epsilon\, var[g(\cdot),[a,b]],
$$
which implies the existence of the integral
$$
\int_a^bx(t)dg(t)
$$
in the completion of the space $X$.
But since the set of elements
$$
\sum_{i=1}^n x(s_i)[g(t_i) - g(t_{i-1})]
$$
is bounded, the integral
$$
\int_a^b x(t)dg(t)
$$
belongs to the quasicompletion of the space $X$, too. But
since the value of the integral does not depend of the choice of generalized
sequence of partial sums, we may express the integral as the limit of a sequence
of partial sums and since $X$ is sequentially complete, the integral belongs
to $X$, too.

2) Let $g(\cdot)$ be continuous on $[a, b]$ and let $x(\cdot)$ have the bounded $p$-semivariation
on $[a, b]$ by a number $K_p (a, b) < \infty$, for all $p \in P$. Since the function $g(\cdot)$ is
uniformly continuous, for arbitrary $\epsilon > 0 $, there exists $\delta > 0 $ such that for all $s_1, s_2$
for which $|s_1 - s_2| < \delta $ there holds
$$
|g(s_1) - g(s_2)| < \epsilon .
$$
Considering divisions such that $d_1, d_2 < {\delta}/2$
and arbitrary $p\in P$ it holds
$$
p(s_{d_1} - s_{d_2})\le 8 \epsilon\, K_p (a,b).
$$
From the similar reasons as in the first part of the proof it follows that there exists the integral
$$
\int_a^b g(t)dx(t)
$$
in $X$.
\end{proof}

\begin{corollary}
Since in the assumptions of Theorem 3 the space $X$ is sequentially complete, the Theorem 3 is valid in the part 2, if $x(\cdot)$ has weakly compact semivariation on $[a,b]$.
\end{corollary}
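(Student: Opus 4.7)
The plan is to reduce the statement directly to part~2 of Theorem~3 already proved. The essential observation is that weakly compact semivariation is a strictly stronger condition than bounded $p$-semivariation for every continuous seminorm $p$ on $X$, so the hypothesis of part~2 is automatically satisfied.

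First, I would recall that in any locally convex vector space a relatively weakly compact set is bounded, since weakly bounded sets coincide with bounded sets for the original topology. Consequently, if $E(a,b)$ is relatively weakly compact, then $p(E(a,b))$ is finite for each $p \in Q$.

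Second, I would show that the set $V_{x(\cdot)}$ from Definition~1 is then also $p$-bounded. Given a partition $a = t_0 < t_1 < \cdots < t_k = b$ and scalars $|\alpha_j| \le 1$, one decomposes each $\alpha_j$ into real and imaginary parts and each real part into positive and negative parts, reducing the problem to controlling sums of the form $\sum_j c_j [x(t_j) - x(t_{j-1})]$ with $c_j \in [0,1]$. A layer-cake identity $c_j = \int_0^1 \mathbf{1}_{\{c_j > s\}}\,ds$ expresses such a sum as a Riemann integral of elements $\sum_{j \in J(s)} [x(t_j) - x(t_{j-1})]$; grouping the index set $J(s)$ into maximal runs of consecutive integers, each slice collapses to $\sum_i [x(t_{b_i}) - x(t_{a_i - 1})]$, which is precisely an element of $E(a,b)$. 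Hence $V_{x(\cdot)}$ is contained in a fixed scalar multiple of the closed absolutely convex hull of $E(a,b)$, and is therefore $p$-bounded for every $p \in Q$.

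With $x(\cdot)$ now of bounded $p$-semivariation and $g(\cdot)$ continuous, part~2 of Theorem~3 applies verbatim, giving existence of the partial sums' limit in the sequentially complete space $X$, which is exactly $\int_a^b g(t)\,dx(t)$. The main technical point I expect to be the decomposition from $E(a,b)$ to $V_{x(\cdot)}$: bookkeeping the real/imaginary and positive/negative splits and verifying that each layer-cake slice really is one of the even--odd pair sums defining $E(a,b)$. Once this is in place, the corollary follows by immediate invocation of the earlier theorem.
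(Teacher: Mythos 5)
Your argument is correct, and it supplies exactly what the paper omits: the corollary is stated without any proof, so the substantive point is the one you isolate, namely that weakly compact semivariation implies bounded semivariation, after which part 2 of the theorem applies verbatim in the sequentially complete space $X$. Your route to that implication --- splitting the coefficients into real/imaginary and positive/negative parts and then writing $c_j=\int_0^1 \mathbf{1}_{\{c_j>s\}}\,ds$ so that each slice telescopes into an element of $E(a,b)$ --- is the measure-theoretic twin of the Abel-summation rearrangement the paper itself performs in the proof of the Sirvint--Edwards theorem (equations (3)--(7)) to place sums $\sum_i g(\tau_i)[x(t_{i+1})-x(t_i)]$ with $0\le g\le 1$ inside the closed absolutely convex hull of $E(a,b)\cup\{0\}$. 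Either device shows that $V_{x(\cdot)}$ sits in a fixed scalar multiple of that hull, which is bounded because a relatively weakly compact set is weakly bounded and hence bounded in the original topology; note that this convexity step is genuinely needed, since the naive estimate only gives containment in $k\cdot$ (absolutely convex hull) with $k$ the number of partition points. The one cosmetic point to add is that the slice with empty index set contributes $0$, which need not belong to $E(a,b)$ itself but does belong to its absolutely convex hull, so the conclusion is unaffected.
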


\section{ Weakly compact mappings in locally convex vector spaces}

In this section we study a general form of weakly compact mapping of continuous functions into a locally convex vector space  by means of above defined integral.

\begin{definition}
A mapping $T$ from locally convex vector space $X$ into locally convex vector space $Y$ is called weakly compact if there exists a neighborhood $U$ of zero in $X$ and weakly compact set $K$ in $Y$ such that there holds $T(U) \subset K$.
\end{definition}

\begin{theorem} (Sirvint, Edwards). If X is a complete locally convex vector space, then a general form of weakly compact linear mapping
\begin{equation}
    T:C{[a,b]}\to X
\end{equation}
is given by
\begin{equation}
Tg=\int_a^bg(t)dx(t),
\end{equation}
where the function $x(\cdot)$ has a weakly compact semivariation on $[a,b]$.
\end{theorem}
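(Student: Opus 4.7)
The plan is to prove both directions of the equivalence. The sufficiency is straightforward: if $x(\cdot)$ has weakly compact semivariation, then $Tg = \int_a^b g(t)\,dx(t)$ exists in $X$ by the corollary following Theorem 3, and the image of the unit ball of $C[a,b]$ under $T$ sits in the closed absolutely convex hull of $E(a,b)$, which is weakly compact by Krein's theorem once the completeness of $X$ is used. This shows that $T$ carries the unit ball of $C[a,b]$ into a weakly compact set, hence is weakly compact.

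For the necessity, I would begin by reducing to scalar measures. For each $x' \in X'$ the functional $x' \circ T : C[a,b] \to \mathbb{C}$ is continuous and linear, so the classical Riesz representation theorem yields a unique regular Borel measure $\mu_{x'}$ on $[a,b]$ with $(x' \circ T)(g) = \int_a^b g(t)\,d\mu_{x'}(t)$. Weak compactness of $T$ provides a neighborhood $U$ and a weakly compact $K \subset X$ with $T(U) \subset K$; passing to polars, the family $\{\mu_{x'} : x' \in K^{\circ}\}$ is uniformly bounded in total variation and uniformly countably additive. This is the Bartle--Dunford--Schwartz style characterization of weakly compact operators out of $C[a,b]$, and is the essential analytic input.

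The central step is to construct $x(\cdot) : [a,b] \to X$. For each $t \in [a,b]$ I define a linear functional $\Lambda_t : X' \to \mathbb{C}$ by $\Lambda_t(x') = \mu_{x'}([a,t])$. Using uniform countable additivity, $\Lambda_t$ is weak$^*$-continuous on each equicontinuous subset of $X'$; Grothendieck's completeness theorem together with the completeness of $X$ then produces a unique $x(t) \in X$ with $\langle x', x(t)\rangle = \mu_{x'}([a,t])$. Equivalently one sets $x(t) = T^{**}(\chi_{[a,t]})$, where the bidual construction is legitimate precisely because $T$ is weakly compact (the locally convex analogue of Gantmacher's theorem). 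I expect this to be the principal obstacle: the Banach-space argument rests on Gantmacher, and the task here is to replace it by the completeness-plus-equicontinuity mechanism that works in complete Hausdorff locally convex spaces.

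Finally I would verify the two required properties. A generic element of $E(a,b)$ has the form $\sum_i \bigl(x(t_{2i}) - x(t_{2i-1})\bigr)$, and pairing with $x' \in K^{\circ}$ gives $\mu_{x'}\bigl(\bigcup_i (t_{2i-1},t_{2i}]\bigr)$; the uniform bound together with uniform countable additivity then yields relative weak compactness of $E(a,b)$, i.e.\ $x(\cdot)$ has weakly compact semivariation. For the integral representation, given continuous $g$ I approximate uniformly by step functions attached to a partition; the scalar sums $\sum_i g(s_i)\langle x', x(t_i) - x(t_{i-1})\rangle$ converge to $\int g\,d\mu_{x'} = \langle x', Tg\rangle$ for every $x' \in X'$, so by Hahn--Banach separation $Tg = \int_a^b g(t)\,dx(t)$, completing the argument.
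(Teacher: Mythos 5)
Your proof is correct in outline, and the forward direction (weakly compact semivariation $\Rightarrow$ $T$ weakly compact) is essentially the paper's argument: the Riemann--Stieltjes sums for $0\le g\le 1$ are placed, via an Abel summation which you suppress but which is the actual content of that step, in the closed absolutely convex hull $F$ of $E(a,b)\cup\{0\}$, Krein's theorem in complete locally convex spaces makes $F$ weakly compact, and a general $g$ with $\|g\|\le 1$ is split as $g_0-g_2+i(g_1-g_3)$ so that $T$ maps the unit ball into $F+iF-F-iF$ (note this is $4F$, not $F$ itself, but still weakly compact). Where you genuinely diverge is the converse. The paper black-boxes it: it simply cites Lewis and Tweddle for the existence of a vector measure $m:\mathcal B\cap[a,b]\to X$ with $x'(Tg)=\int g\,d(x'm)$, sets $y(t)=m([a,t])$, $y(a)=0$, and reads off the representation. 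You instead reconstruct $m$ from scratch: scalar Riesz representation for each $x'\circ T$, uniform boundedness and uniform countable additivity of $\{\mu_{x'}:x'\in H\}$ for equicontinuous $H$ (obtainable by factoring $T$ through the local Banach space $X_V$ when $H\subset V^\circ$ and applying the Banach-space Bartle--Dunford--Schwartz/Gantmacher theory there), and then Grothendieck's completeness theorem to realize $\Lambda_t(x')=\mu_{x'}([a,t])$ as an element of $X$. This buys self-containedness and makes visible exactly where completeness of $X$ enters; the cost is that the two assertions you state without proof are precisely the hard part. In particular, the weak$^*$-continuity of $\Lambda_t$ on an equicontinuous $H$ is not automatic from $\mu_{x'_\alpha}\to\mu_{x'}$ tested against continuous functions: you need that $\{\mu_{x'}:x'\in H\}$ is relatively weakly compact in $M[a,b]$, on which set the weak and weak$^*$ topologies coincide, so that $\mu_{x'_\alpha}(E)\to\mu_{x'}(E)$ for Borel $E$; without the uniform countable additivity this fails (point masses $\delta_{t+1/n}$ give the standard counterexample). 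Two small repairs: set $x(a)=0$ rather than $x(a)=m(\{a\})$, otherwise a possible atom of $m$ at $a$ is lost in the telescoping and the identity $Tg=\int_a^b g\,dx$ acquires an endpoint defect; and for the weakly compact semivariation of $x(\cdot)$ the cleanest route is to observe that $E(a,b)$ lies in the weak closure of $T$ applied to a bounded set (approximating indicators of finite unions of half-open intervals by continuous functions with $\|g\|\le 1$), rather than arguing through uniform countable additivity alone.
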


This theorem is a generalization, of the results proved for Banach spaces by Sirvint (\cite
{si}) and also by Edwards (\cite {ed}), to locally convex spaces by means of results of (Tweddle \cite {tw}, Lewis, \cite {le}, Deb \cite {de}).
\begin{proof}
The existence of the integral follows from the following. Let $x(\cdot)$ have a weakly compact semivariation on $[a,b]$.
Consider first the case that the function $g(\cdot)$ is real valued and that $0\le g(t)\le 1$, where $a\le t\le b$. Then
\begin{equation}
y = \sum_{i=1}^n g(\tau_i)[x(t_{i+1}) - x(t_i)] = \sum_{r=1}^n g(\tau_{i_r} )[x(t_{i_{r}+1}) - x(t_{i_r})],
\end{equation}
where
\begin{equation}
0\le g(\tau_{i_{1}})\le  g(\tau_{i_{2}})\le\dots\le g(\tau_{i_{n}})\le 1.
\end{equation}
Moreover there holds
\begin{equation}
y = g(\tau_{i{_1}})\sum_{r=1}^n[x(t_{{i_r}+1})- x(t_{i_r})] +
\sum_{s=2}^n\{ [g(\tau_{i_s} ) - g(\tau_{i_{s-1}} )]\sum_{r=s}^n
[x(t_{i_r+1}) - x(t_{i_r})]\},
\end{equation}
Since
\begin{equation}
g(\tau_{i_1})\ge 0, g(\tau_{i_s} - g(\tau_{i_{s-1}}) \ge 0, s=2,3,\dots,n
\end{equation}
and
\begin{equation}
g(\tau_{i_1}) + \sum_{s=2}^n \{g(\tau_{i_s}) - g(\tau_{i_{s-1}})\} =
g(\tau_{i_{n}})\le 1,
\end{equation}
it is evident that $y$ and so  also $Tg$ as limit belongs to the closure of absolute convex hull, $F$, of the set
$E(a,b)\cup \{0\}$.
But $E(a,b)$, and therefore $E(a,b)\cup \{0\}$ is conditionally weakly compact, and hence , by
theorem of Krein in locally convex spaces (Rob \cite {ro} p. 18), also Tweddle \cite{tw})  $F$ is weakly compact. Since
\begin{equation}
T\{g:\|g\|\le 1\}\subset \{\sum_{r=0}^3 i^rx_r:x_r\in F\}
\end{equation}
it follows, by application of Eberlein's theorem in lcs (Rob \cite {ro} or Tweddle \cite {tw}) that $T$ is a weakly compact mapping. Indeed, take now a function $g(\cdot)$ such that $\|g|\ \le 1 $. The function $g(\cdot)$ can be written in the form
$$
g(t)=g_0(t)- g_2(t) + i[g_1(t) - g_3(t)], \,t\in [a,b],
$$
where $g_i(t)$ are nonnegative real valued functions such that
$$
0\le g_i(t) \le 1, \, i=0,1,2,3.
$$
Since
$$
T(g)= \sum_{r=0}^3 i^r Tg_i,
$$
where $Tg_i,\, i=0,1,2,3$, belongs to the weakly compact set $F$, the mapping $T$  maps the unit sphere in $C[a,b]$  into the weakly compact set in $X$, i.e.,the mapping $T$ is weakly  compact.

Conversely, if $T: C[a,b] \to  X $ is weakly compact mapping, then there exists a vector measure $m:\mathcal{B}\cap [a,b]\to X$ such that for $x'\in X' ,
$ there holds
%$$
\begin{equation}
x'(Tg)=\int_a^b g(t)x'm(dt), x'\in X',
%$$
\end{equation}
Lewis (\cite {le}), Tweddle (\cite {tw}).

If we put
%$$
\begin{equation}
y(t)=m([a,t]), a <t \le b,
%$$
\end{equation}
$$
y(a)=0,
$$
then $y(\cdot) \in \mathcal{WCS}(a,b)$ and
$$
x'(Tg)= \int_{[a,b]} g(t) x'm(dt)= \int_a^b  g(t)dx'y(t)=
$$
$$
=x'\{\int_a^b g(t)dy(t)\}, x'\in X',
$$
so that
$$
Tg=\int_a^b g(t)dy(t)
$$
as required.
\end{proof}

Denote by $\mathcal {WCS}_0$ the class of all functions in $\mathcal {WCS}$ which are (strongly) continuous to the right.

If now $x(\cdot) \in \mathcal {WCS}_0$ is given, then the mapping $T:C(a,b)\to X$ defined by
$$
Tg=\int_a^b g(t) dx(t)
$$
is weakly compact. If the associated $m:\mathcal{B} \cap {[a,b]}\to X$ and $y:[a,b] \to X$ are constructed as in the foregoing proof (see (9 ) and (10)), then we have
$$
\int_a^b g(t)dx(t)= \int_a^b g(t)dy(t), \, g\in C(a,b),
$$
and then, by a usual way,
$$
x(t)-x(a)=y(t)-y(a), \, a\le t\le b.
$$
Hence $y(a)=y(a+0)$ and therefore $m(a)=0$, so that
$$
y(t)=m([a,t])=m([a,t])-m((a))=m((a,t]),a<t \le b.
$$

If we now denote by $m_0$ the restriction of $m$ to $\mathcal {B}\cap(a,b]$, then we have
$$
m_0(E)= y(d)-y(c)=x(d)-x(c),
$$
whenever $E=(c,d]\subset {(a,b]}.$
%\end{proof}
We can thus construct $m_0$ for $- \infty < a < b < \infty $ and it is now possible as
before to complete the proof of the following
\begin{corollary}
If $x(\cdot): R \to X$ is a function of the class $WCS_0$, where $X$ is an
arbitrary quasicomplete locally convex vector space, then there exists a unique
vector-valued measure $ m: \mathcal B \to X$ satisfying
$$
x(d)- x(c) = m(E),\, E =(c, d].
$$
\end{corollary}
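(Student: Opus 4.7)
The plan is to reduce to the bounded-interval case already settled in the discussion immediately preceding the corollary, and then glue. Exhaust $\mathbb{R}$ by the increasing family of intervals $[-n, n]$, $n \in \mathbb{N}$; for each $n$ the restriction $x|_{[-n,n]}$ still has weakly compact semivariation and is right-continuous, so the construction just described yields a vector measure $m_n : \mathcal{B} \cap (-n, n] \to X$ satisfying $m_n((c,d]) = x(d) - x(c)$ for every half-open $(c,d] \subset (-n, n]$.

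Next I would verify consistency: for $n < k$, the restriction of $m_k$ to $\mathcal{B} \cap (-n, n]$ coincides with $m_n$. Both are $X$-valued $\sigma$-additive measures that agree on the $\pi$-system of half-open subintervals of $(-n, n]$. Composing with an arbitrary $x' \in X'$ reduces the equality to the classical scalar uniqueness theorem for $\sigma$-additive measures generated by a $\pi$-system; since $X$ is Hausdorff, the equality $x' \circ m_k = x' \circ m_n$ for every $x'$ forces $m_k = m_n$ on the common domain.

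With consistency in hand, I would set $m(E) := m_n(E)$ for any $n$ with $E \subset (-n, n]$. Because every bounded Borel set lies in some $(-n, n]$, and any countable disjoint family in $\mathcal{B}$ whose union is bounded fits inside one such interval, $m$ is a well-defined $\sigma$-additive $X$-valued measure on $\mathcal{B}$, and the required identity $m((c,d]) = x(d) - x(c)$ is inherited from the local pieces.

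Uniqueness is handled by the same device: a second measure $\tilde m$ with $\tilde m((c,d]) = x(d) - x(c)$ agrees with $m$ on every half-open interval, hence on the $\sigma$-ring they generate inside each $(-n, n]$, once more by composing with $x' \in X'$ and invoking the scalar uniqueness and Hausdorffness. The main obstacle is this scalar reduction: it rests on $x' \circ m_n$ being a genuine scalar $\sigma$-additive measure, which was already guaranteed by the representation result of Lewis and Tweddle used in the proof of the preceding theorem. Everything else reduces to routine bookkeeping about restrictions and disjoint unions.
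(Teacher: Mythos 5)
Your proposal is correct and follows essentially the same route as the paper: construct the measure on each bounded interval $(a,b]$ via the weakly compact operator $Tg=\int_a^b g\,dx$ and the representation theorem, then extend to the ring $\mathcal B$ of bounded Borel sets. The paper compresses the extension into the phrase ``it is now possible as before to complete the proof''; your consistency check on the $\pi$-system of half-open intervals, combined with Hahn--Banach separation and the scalar uniqueness theorem, supplies exactly the gluing and uniqueness bookkeeping that the paper omits.
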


\end{document}